\theoremstyle{definition}
\newtheorem{definition}{Definition}
\theoremstyle{remark}
\newtheorem{remark}[definition]{Remark}
\theoremstyle{plain}
\newtheorem{theorem}[definition]{Theorem}
\newtheorem{result}[definition]{Result}
\newtheorem{lemma}[definition]{Lemma}
\newtheorem{corollary}[definition]{Corollary}
\newcommand{\eps}{\varepsilon}
\newcommand{\disk}{\mathbb{D}}
\newcommand{\hol}{\mathcal{O}}
\newcommand{\bcdot}{\boldsymbol{\cdot}}
\newcommand{\C}{\mathbb{C}}
\newcommand{\D}{\mathbb{D}}
  \def\blfootnote{\gdef\@thefnmark{}\@footnotetext}
\begin{document}
	\title{Conformal Mappings Through the Lens of Invariant Metrics}
	\author{ Bharathi Thiruvengadam \thanks{Bharathi is supported by a fellowship
from the University Grants Commission, India.\\
  \href{mailto:212114004@smail.iitpkd.ac.in}{212114004@smail.iitpkd.ac.in}\\
Department of Mathematics, Indian Institute of Technology Palakkad,
Palakkad, Kerala- 678623, India} \and Jaikrishnan Janardhanan \thanks{
  \href{mailto:jaikrishnan@iitpkd.ac.in}{jaikrishnan@iitpkd.ac.in}\\
Department of Mathematics, Indian Institute of Technology Palakkad,
Palakkad, Kerala- 678623, India}}

	\maketitle

	\begin{abstract}
    The main objective of this paper is to show that balls under invariant
    metrics on hyperbolic planar domains are finitely-connected. As applications, we  give new and transparent proofs of classical results on conformal
    mappings of planar domains. In particular, we show that any conformal
    self-map of a hyperbolic planar domain with three fixed points is the
    identity. We also give a new and very simple proof of the theorem by Aumann
    and Carathéodory that
    states that the isotropy groups of a hyperbolic planar domain are either
    finite or the domain is simply-connected.
	\end{abstract}

  \blfootnote{\textup{2020} \textit{Mathematics Subject Classification}:
Primary: 30C35, 30F45, 32H50}

\blfootnote{\textit{Key words and phrases}: Conformal mappings, fixed points,
invariant metrics, Kobayashi and Carathéodory pseudodistances}



	\section{Introduction}

	Perhaps the most intriguing result in the theory of conformal
	mappings of planar domains proved in the last 50 years is the
	following:

	\begin{result}
		[Maskit \cite{maskit1968conformal}]\label{R:maskit} Let
		$D \subset \C$ be a hyperbolic domain and $f:D \to D$ be a
		conformal automorphism. If $f$ has three fixed points then $f$ is the
		identity.
	\end{result}
	\noindent
	This result is not explicitly stated in \cite{maskit1968conformal} but
	follows immediately from the main result there:
	\begin{result}
		\label{R: maskitmobius} Any planar domain is biholomorphic to a domain
		whose automorphisms are Möbius transformations.
	\end{result}
	There are now several proofs of Result \ref{R:maskit} available in the
	literature. Almost simultaneously in 1978/79, proofs were given by
	Leschinger \cite{Leschinger1978fixed}, Peschl and Lehtinen \cite{peschl1979conformal},
	and Minda \cite{minda1979fixed}. Minda also proves a version of Result~\ref{R:maskit}
	for Riemann surfaces and the main tool he uses is a version of Result~\ref{R:
	maskitmobius} for finite genus Riemann surfaces also proved by Maskit.
	There are other proofs of Result~\ref{R:maskit} as well; see \cite{suita1981fixed},
	\cite{fischer1987fixed}, \cite{Renggli1988} and more recently
	\cite{fridman2002fixed,krantz2005conformal}.
	The proof in the last pair of papers is particularly interesting and places the result in
	the context of the cut locus in Riemannian geometry. Further extensions of
	Result~1 to higher-dimensional hyperbolic manifolds are also studied in
	\cite{fridman2002fixed} and in \cite{fridman2007fixed}.

	Despite the glut of available proofs, the most elegant and
	aesthetically appealing argument is nevertheless still the one that
	follows immediately from Result~\ref{R: maskitmobius}. Unfortunately,
	the proof of Result~\ref{R: maskitmobius} is quite sophisticated. The
	genesis of this paper is the simple observation that if the domain $D$
	is finitely-connected then Result~\ref{R: maskitmobius} is a consequence
	of the famous generalization of the Riemann mapping theorem to finitely-connected
	domains given by Koebe.

	\begin{result}[Koebe \cite{koebe1920uniformisierung}]\label{R:koebe} Let $D$ be a finitely-connected planar
		domain. Then $D$ is biholomorphic to another domain $D'$ all of whose
		boundary components are points or circles. Furthermore, the automorphisms
		of $D'$ are Möbius transformations.
	\end{result}
	In the earlier paper \cite{koebe1908uniformisierung}, Koebe conjectured that the above result holds true for
	\textit{all} planar domains. This is known as Koebe's Kreisnormierungsproblem.
	In the seminal work of He and Schramm
	\cite{schramm93fixed}, Koebe's conjecture has been established for planar
	domains with countably many boundary components. However, their proof
	is substantially more sophisticated than Maskit's proof of Result~\ref{R:
	maskitmobius} and Koebe's conjecture is far from being established in
	its full generality.

	The main inspiration for our paper is to see if one could recover the
	elegant proof of Result~\ref{R:maskit} using Koebe's theorem instead
	of Result~\ref{R: maskitmobius}. In order to do this, we need to somehow
	reduce the proof of Result~\ref{R:maskit} to the case where $D$ is finitely-connected.
	The main result of this paper does exactly this:

	\begin{theorem}
		\label{T: kob} Let $R$ be a Kobayashi hyperbolic Riemann surface.
		Then the fundamental group of each ball under the Kobayashi
		distance is finitely generated. In particular, the balls under the
		Kobayashi distance of a hyperbolic planar domain are finitely-connected.
	\end{theorem}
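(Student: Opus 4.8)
The plan is to pass to the universal cover and reduce the statement to a finiteness property of a Fuchsian group action. Since $R$ is Kobayashi hyperbolic, uniformization gives a holomorphic universal covering $\pi \colon \D \to R$ whose deck group $\Gamma \cong \pi_1(R)$ is a Fuchsian group acting freely and properly discontinuously on $\D$. Because $\pi$ is a holomorphic covering by the disk, the Kobayashi distance $k_R$ on $R$ is the push-forward of the Poincar\'e distance $\rho$: for lifts one has $k_R(\pi(a),\pi(b)) = \min_{\gamma \in \Gamma} \rho(a,\gamma b)$, the minimum being attained since orbits are discrete. Fixing a lift $\tilde p$ of the centre, this identity shows at once that $\pi^{-1}\big(B_{k_R}(p,r)\big) = \Gamma\cdot D$, where $D = \{z : \rho(\tilde p, z) < r\}$ is a hyperbolic disk. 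As hyperbolic balls are relatively compact Euclidean disks, $\overline{D}$ is compact in $\D$; hence $U := B_{k_R}(p,r) = \pi(D)$ is a relatively compact domain in $R$ with $\overline U = \pi(\overline D)$ compact.

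The engine of the proof is proper discontinuity applied to the compact set $\overline D$, which makes the neighbour set $F := \{\gamma \in \Gamma : \gamma \overline D \cap \overline D \neq \emptyset\}$ finite. I would use this in two ways. First, $\pi|_D \colon D \to U$ is an open surjection, hence a quotient map, whose fibres are exactly the $\Gamma$-orbits meeting $D$; since two points of $D$ in the same orbit differ by an element of $F$, we get $U \cong D/\!\sim$, where $\sim$ is generated by the finitely many partial M\"obius identifications $\gamma|_{D \cap \gamma^{-1}D}$, $\gamma \in F$. Thus $U$ is an open disk subjected to only finitely many gluings. Second, applying proper discontinuity to the compact circle $\partial D$, only finitely many translates meet $\partial D$, so the frontier $\overline U \setminus U$ is contained in $\pi(\partial D)$, which is the image of a circle with finitely many self-identifications, i.e.\ a finite graph $G$.

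It then remains to convert this local finiteness into global finiteness of the topology of $U$. The plan is to realise $\overline U$ as a finite CW complex: choose a triangulation of $\overline D$ in which the finitely many real-analytic overlaps $\overline D \cap \gamma \overline D$ ($\gamma \in F$) are subcomplexes and the identifying maps are simplicial, so that $\overline U = \overline D/\!\sim$ becomes a finite complex. Since $U = \overline U \setminus G$ is the complement of a subcomplex, a collar/regular-neighbourhood argument around the compact frontier graph $G$ shows that $U$ is homotopy equivalent to a finite complex, whence $\pi_1(U)$ is finitely generated. For the planar corollary, a domain with finitely generated (free) fundamental group is precisely a finitely connected one.

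I expect the last step to be the main obstacle: proper discontinuity only furnishes \emph{local} finiteness, and upgrading it to a global bound on the number of holes and ends of $U$ is the real content. The two delicate points are (i) producing an identification-compatible finite triangulation near the overlaps and along the boundary circle $\partial D$, where the gluing occurs along two-dimensional regions rather than edges, and (ii) ruling out an accumulation of holes against the frontier. I would isolate this as a clean topological lemma---\emph{a relatively compact domain in a surface whose frontier is a finite graph is of finite type}---and then prove the theorem by verifying its hypothesis through the finiteness of $F$.
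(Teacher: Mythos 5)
Your reduction to the disk is set up correctly: the covering formula for the Kobayashi distance, the identification $B=\pi(D)$ with $\overline{D}$ compact in $\D$, the finiteness of the neighbour set $F$ via proper discontinuity, and the containment of the frontier in $\pi(\partial D)$ are all right, and your route is genuinely different from the paper's. But there is a real gap, and it sits exactly where you predicted: the passage from ``$\overline{U}$ is a closed disk with finitely many M\"obius gluings'' to ``$\overline{U}$ is a finite CW complex, hence $\pi_{1}(U)$ is finitely generated.'' The instruction to ``choose a triangulation of $\overline{D}$ in which the overlaps are subcomplexes and the identifying maps are simplicial'' is not something one can simply choose: the identifications act on the two-dimensional lens regions $\overline{D}\cap\gamma\overline{D}$, so the triangulation must be \emph{equivariant} under the finitely many partial maps, and producing such a triangulation is essentially the same problem as constructing a fundamental domain --- this step carries the whole weight of the proof and is left unproved. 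A second, fixable, imprecision: your isolated lemma assumes the frontier \emph{is} a finite graph, but what you have shown is only that it is \emph{contained} in one, and a closed subset of a finite graph can have infinitely many components (a Cantor set inside an edge). What rescues you is a fact you did not state: $\partial D\cap\Gamma D=\bigcup_{\gamma\in F}(\partial D\cap\gamma D)$ is a finite union of open arcs, because two distinct round circles meet in at most two points; hence $\partial B$ is a finite union of closed arcs and points. Note that this observation alone already yields the planar ``in particular'' with no triangulation at all: every component of $\hat{\C}\setminus B$ contains a component of $\partial B$, of which there are finitely many.

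The standard repair within your framework is to replace $D$ by $D\cap\mathrm{Dir}(\tilde{p})$, where $\mathrm{Dir}(\tilde{p})$ is the Dirichlet domain of $\Gamma$ centred at $\tilde{p}$: since the orbit minimum defining $d_{R}$ is attained at a point of the closed Dirichlet domain, $B=\pi\bigl(D\cap\mathrm{Dir}(\tilde{p})\bigr)$, and this set is a relatively compact convex region bounded by finitely many geodesic sides (local finiteness of the Dirichlet tessellation plus compactness of $\overline{D}$) together with arcs of $\partial D$; the gluings are then finitely many one-dimensional side pairings and the finite CW structure is immediate. The paper avoids quotient topology altogether: it covers $\overline{D}$ by finitely many Poincar\'e disks of radius $r/3$ on which the covering map is injective (Lebesgue number plus compactness), pushes these down to a finite cover of $B$ by simply connected open sets with connected pairwise intersections, and shows by a Van Kampen--style cut-and-splice that every loop in $B$ is homotopic to one of finitely many special loops built from a fixed finite path system, concluding finite connectivity in the plane from the Separation theorem via Corollary~\ref{cor:fin}. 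Your approach, once repaired through the Dirichlet domain, buys more (a finite CW model of $\overline{B}$, hence even finite presentability), while the paper's buys brevity and, as its remark notes, works verbatim for complex manifolds covered by a hyperbolic convex domain, where Dirichlet polygon combinatorics are unavailable.
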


	Result~\ref{R:maskit} is immediate from the above theorem: just
	consider a large Kobayashi ball that swallows up the three fixed
	points and apply Result~\ref{R:koebe}. The original argument using Result~\ref{R:
	maskitmobius} actually extends the given map to a much larger domain,
	namely $\hat{\C}$ (the extended complex plane). In contrast, the invariance of the Kobayashi
	balls under holomorphic maps allows us to \textit{restrict} the given
	map to Kobayashi balls. Theorem~\ref{T: kob} shows that these balls
	are particularly nice so that the restricted maps can be extended to
	$\hat{\C}$. This idea of restricting holomorphic maps to Kobayashi or
	Carathéodory balls in order to get global conclusions is the central
	theme of this paper. Therefore we also prove an analogous result for
	balls under the Carathéodory distance:

	\begin{theorem}
		\label{T: car} Let $D \subset \C$ be a domain that is Carathéodory
		hyperbolic. Then each relatively compact component of a ball under
		the Carathéodory distance is a finitely-connected domain.
	\end{theorem}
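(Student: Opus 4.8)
The plan is to realise a Carathéodory ball as a sublevel set of a subharmonic function and then control its holes one family at a time. Fix the centre $p$ and introduce the extremal quantity $m_D(p,q)=\sup\{\,|f(q)| : f\in\hol(D,\D),\ f(p)=0\,\}$; composing competitors with automorphisms of $\D$ shows that $c_D(p,q)=\tanh^{-1}m_D(p,q)$. Since $D$ is Carathéodory hyperbolic, $c_D(p,\cdot)$ is continuous, so $v:=\log m_D(p,\cdot)$ is an upper envelope of the subharmonic functions $\log|f|$ and is continuous on $D\setminus\{p\}$; hence $v$ is subharmonic on $D$, taking the value $-\infty$ only at $p$. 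For finite $r$ the ball is then exactly the sublevel set $\{v<c\}$ with $c=\log\tanh r$. I would first record that for a relatively compact component $K$ one has $\overline K\subset D$ compact, $v<c$ on $K$, and, by continuity together with the openness of the components of $\{v<c\}$, the boundary identity $v=c$ on $\partial K$.

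Next I would split the components of $\hat{\C}\setminus K$ into the unbounded one $C_\infty$ and the \emph{holes} (the compact components), and sort the holes into \emph{inner} holes (those contained in $D$) and \emph{boundary} holes (those meeting $\hat{\C}\setminus D$). Because $\overline K\subset D$ is compact, $K$ lies in some disc $B(0,R)$, and then every hole lies in $B(0,R)$ as well, so all holes are uniformly bounded. To bound the boundary holes I would argue by compactness: picking a point $z_i\in H_i\setminus D$ in each of infinitely many distinct boundary holes would give, after passing to a subsequence, a limit $z^{\ast}\in\C\setminus D$; but a segment joining points of two distinct components of $\hat{\C}\setminus K$ must meet $K$, so $z^{\ast}\in\overline K\subset D$, a contradiction. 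Hence there are only finitely many boundary holes, say $H_1,\dots,H_N$.

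The crux, and the step I expect to be the main obstacle, is to rule out inner holes altogether. Set $\Omega:=\hat{\C}\setminus(C_\infty\cup H_1\cup\cdots\cup H_N)$, the region obtained from $K$ by filling in only its inner holes; one checks that $\Omega$ is a bounded domain (it is connected because each inner hole is glued to $K$ along part of $\partial K$) with $\overline\Omega\subset D$ and $\partial\Omega\subset\partial K$. Thus $v$ is subharmonic on $\Omega$, continuous up to the boundary, and equal to $c$ on $\partial\Omega$, so the maximum principle gives $v\le c$ on $\Omega$. If an inner hole $H$ existed, then $\partial H\subset\partial K$ would consist of \emph{interior} points of $\Omega$ at which $v=c=\max_{\Omega}v$; the strong maximum principle would force $v\equiv c$ on the connected set $\Omega$, contradicting $v<c$ on $K$. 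Therefore $K$ has no inner holes, so $\hat{\C}\setminus K$ has exactly $N+1$ components and $K$ is finitely-connected. Relative compactness is used essentially throughout (to make $\partial K$ lie in $D$, to bound the holes, and to run both maximum-principle and compactness arguments on $\overline\Omega\subset D$), which is consistent with the theorem asserting finite connectivity only for the relatively compact components.
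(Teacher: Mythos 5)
Your proof is correct, and although it shares the paper's two-part skeleton---use subharmonicity and the maximum principle to rule out holes compactly contained in $D$, then use compactness to show that only finitely many holes can meet $\hat{\C}\setminus D$---the machinery at both steps is genuinely different. The paper's Lemma~\ref{L:relcomp} invokes the Separation Theorem (Result~\ref{seperation theorem}) to enclose a putative compact hole by a polygonal Jordan curve and then applies the maximum principle at a maximizer of $c_{D}(p,\bcdot)$ on the union of enclosed complementary components; you avoid the Separation Theorem (and the tacit polygonal Jordan curve theorem the paper flags in a remark) altogether by viewing the ball as the sublevel set $\{v<c\}$ of the subharmonic function $v=\log\tanh c_{D}(p,\bcdot)$, filling in the inner holes to form a domain $\Omega$ with $\partial\Omega\subset\partial K$, and running the weak-plus-strong maximum principle against the boundary identity $v=c$ on $\partial K$. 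The price is a handful of point-set verifications, all of which check out: $\partial H\subset\partial K$ for every complementary component $H$ (a boundary point of $H$ not in $\overline{K}$ would be interior to $H$), connectedness of $\Omega$ via $\overline{K}\cap H\supset\partial H\neq\varnothing$, and $\partial\Omega\subset\partial K$ even when the inner holes are a priori infinite in number; indeed, your choice to delete only $C_{\infty}$ and the finitely many boundary holes, rather than to take the union of $K$ with possibly infinitely many inner holes, is precisely what keeps $\Omega$ open, so counting the boundary holes \emph{before} the fill-in is essential and you ordered the steps correctly. For the boundary holes, the paper (with your $K$ playing the role of its $B$) encloses $\overline{B}$ by a polygonal curve and covers the trapped compact piece of $\hat{\C}\setminus D$ by finitely many balls of radius $s/2$, each of which, being connected and disjoint from $B$, meets only one complementary component; your subsequence argument---points $z_{i}\in H_{i}\setminus D$ accumulate at some $z^{*}$, which must lie in $\overline{K}\subset D$ because a $K$-free disk about $z^{*}$ would sit inside a single complementary component---reaches the same finiteness without any curve. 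The paper's route buys brevity and the reusable Lemma~\ref{L:relcomp}, which is applied again in Section~\ref{S:applications}; yours buys independence from the Separation Theorem and an explicit sublevel-set description of Carath\'eodory balls, with subharmonicity derived directly from the upper-envelope definition of the extremal function rather than from the stated plurisubharmonicity of $\log c_{D}$.
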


	Our proofs of both these theorems are not very hard. To prove
	Theorem~\ref{T: kob}, we use an argument that is a
	variant of the proof of the Van Kampen theorem. Our proof in fact
	works \textit{mutatis mutandis} for a complex manifold covered by a
	hyperbolic convex domain. As far as we are aware, the finite-connectivity of balls
	under the Kobayashi distance has not been recorded in the literature.

	As can be guessed from the statement, the proof of Theorem~\ref{T: car}
	is somewhat subtle but still not very difficult. The Carathéodory distance
	is, in general, not as well-behaved as the Kobayashi distance.
	Indeed, it has been shown recently that there are bounded planar domains
	in which the balls under the Carathéodory distance are not always connected
	(see \cite{ng2021caratheodory})! This is the reason why the statement of
	Theorem~\ref{T: car} is phrased in terms of relatively compact components.
	The main tools we use are the Separation theorem and the subharmonicity of the
	Carathéodory distance. 

	Apart from the proof of Result~\ref{R:maskit}, we give new and
	simple proofs of several classical results on conformal mappings
	using the finite-connectivity of the balls under the Kobayashi and
	Carathéodory distances. In particular, we show that the isotropy
	groups of a hyperbolic domain are either finite or the domain is
	simply-connected. (Theorem~\ref{thm:isotropy}). This is the famous
	Aumann--Carathéodory theorem \cite{aumannCaratheodary}. As far as we are
	aware, there is no simple proof of this result in the literature. A long
	and, dare we say, tedious proof can be found in \cite[Chapter~X]{burckel2021classical}. A
	more geometric and shorter proof can be found in \cite{krantz2005conformal}. The latter
	paper deals only with the bounded case and involves the construction of a
	new smooth invariant metric on the domain that extends smoothly beyond the
	boundary. In contrast, our proof uses the finite-connectivity of balls under
	the Kobayashi distance and is very short and transparent.  

  \subsection*{Organization of the paper} 
  
  In Section~\ref{S: hyperbolic}, we recall basic material about invariant
  metrics and inner distances. Section~\ref{S:canonical} contains the relevant
  facts about finitely-connected domains. The proofs of our
  main results constitute Section~\ref{S:proofs}. Finally, we present
  applications to conformal maps in Section~\ref{S:applications}

  \subsection*{Acknowledgements} We would like to thank Dr. G.P. Balakumar for
  many useful discussions and for pointing out the reference
  \cite{watt2001cartan}.

	\section{The Carathéodory and Kobayashi pseudodistances}\label{S: hyperbolic}

	In this section, we give a brief summary (without proofs) of the
	basic definitions and properties of the Carathéodory and Kobayashi
	pseudodistances. As the material is quite standard, we will not give
	precise references and instead point the reader to the books
	\cite{jp2013,kobayashi1998hyperbolic,abate2023holomorphic} for
	details.

	Our starting point is the unit disk with the Poincaré metric. Let $\rho$
	be the distance induced by the Poincaré metric. Specifically,
	\[
		\rho(z,w) = \frac{1}{2}\log\left(\frac{1 + \left|\frac{z - w}{1 -
		z\overline{w}}\right|}{1 - \left|\frac{z - w}{1 - z\overline{w}}\right|}
		\right)
	\]
	Let $M$ be a complex manifold. We now define two pseudodistances on
	$M$ using $\rho$.

	\begin{definition}[Carathéodory pseudodistance]
		The Carathéodory pseudo-distance $c_{M}$ is defined by
		\[
			c_{M}(p,p') := \sup_{f\in\hol(M,\disk)}\rho(f(p),f(p')) \text{ for
			}p, p' \in M.
		\]
	\end{definition}

	\begin{definition}[Kobayashi pseudodistance]
		Let $p, q \in M$. We choose points $p = p_{0},\ldots,p_{k} = q$ of
		$M$, points $a_{1},\ldots,a_{k},b_{1},\ldots,b_{k}$ of the unit disk,
		and holomorphic mappings $f_{1},\ldots,f_{k}$ from $\disk$ into
		$M$, such that $f_{i}(a_{i}) = p_{i-1}$ and $f_{i}(b_{i}) = p_{i}$
		for $i = 1,\ldots,k$. For each such choice of points and mappings,
		we consider the number
		\[
			\sum_{i=1}^{k} \rho(a_{i},b_{i}).
		\]
		The Kobayashi pseudodistance $d(p,q)$ is the infimum of the set of
		all such numbers obtained by varying the choice of points and mappings.
	\end{definition}
	It is not hard to show that the Carathéodory and Kobayashi pseudodistances
	on a complex manifold are indeed pseudodistances. The following
	results are easy consequences of the definitions.
	\begin{result}
		Let $M$ and $N$ be two complex manifolds, $c_{M}$ and $c_{N}$ be
		the Carathéodory pseudodistances, and $d_{M}$ and $d_{N}$ be the Kobayashi
		pseudodistances on $M$ and $N$, respectively. Then every holomorphic
		mapping $f: M \to N$ is distance decreasing in these pseudodistances.
		In particular, biholomorphisms are isometries under the Carathéodory
		and Kobayashi pseudodistances.
	\end{result}

	\begin{remark}
		The above result is the reason why the Carathéodory and Kobayashi
		pseudodistances are referred to as invariant metrics.
	\end{remark}

	\begin{result}
		For the open unit disk $\disk$, both the Kobayashi and the
		Carathéodory pseudodistances coincide with the Poincaré distance.
	\end{result}

	\begin{result}
		For any complex manifold $M$, the Kobayashi pseudodistance
		dominates the Carathéodory pseudodistance.
	\end{result}

	The next result is crucial and is the key property that makes the
	Kobayashi pseudodistance more useful than the Carathéodory pseudodistance.

	\begin{result}
		\label{P: cover} Let $M$ be a complex manifold and $(\widetilde{M},
		\pi)$ be a covering manifold. Let $p,q \in M$, and $\widetilde{p}\in
		\widetilde{M}$ such that $\pi(\widetilde{p}) = p$. Then,
		\[
			d_{M}(p,q) = \inf_{\widetilde{q} \in \pi^{-1}(q)}d_{\widetilde{M}}
			(\widetilde{p},\widetilde{q}).
		\]
	\end{result}

	Despite the non-availability of the above result for the
	Carathéodory pseudodistance, the Carathéodory pseudodistance does have
	one advantage over the Kobayashi pseudodistance:

	\begin{result}
		Let $D$ be a domain in $\C^{n}$, then $\log c_{D}$ is a
		plurisubharmonic function on $D\times D$.
	\end{result}

	\subsection{Hyperbolicity and Completeness}
	\begin{definition}[Hyperbolic Manifolds]
		Let $M$ be a complex manifold and let $d$ be the associated Kobayashi
		pseudodistance. We say that $M$ is \textit{hyperbolic} if $d$ is a
		distance. If, in addition, the metric space $(M,D)$ is complete, we
		say $M$ is \textit{complete hyperbolic}. We say $M$ is C-hyperbolic
		if the associated Carathéodory pseudodistance is a distance.
	\end{definition}

	As the Kobayashi pseudodistance always dominates the Carathéodory pseudodistance,
	the following result is immediate.
	\begin{result}
		A complex manifold is hyperbolic if the associated Carathéodory
		pseudodistance is a distance. In particular, every bounded domain in $\C^{n}$ is hyperbolic.
	\end{result}

	It is not clear just from the definitions whether the topology
	induced by the Carathéodory and Kobayashi distances coincide with
	the manifold topology assuming the manifold is hyperbolic or C-hyperbolic.
	This is in fact not true in general for the topology induced by the Carathéodory
	distance. However, we do have the following

	\begin{result}
		If $M$ a hyperbolic complex manifold then the topology induced by the
		Kobayashi distance coincides with the manifold topology. If
		$D \subset \C$ is a C-hyperbolic domain then the topology induced
		by the Carathéodory distance coincides with the Euclidean topology.
	\end{result}

	The next result gives us an abstract way of proving that a given complex
	manifold $M$ is hyperbolic.

	\begin{result}
		Let $M$ be a complex manifold and $(\widetilde{M},\pi)$ be a covering
		manifold. Then $M$ is hyperbolic iff $\widetilde{M}$ is hyperbolic.
		Furthermore, $M$ is complete hyperbolic iff $\widetilde{M}$ is
		complete hyperbolic.
	\end{result}

	\begin{remark}
		By the Uniformization theorem for Riemann surfaces, it follows that a
		Riemann surface is complete hyperbolic iff its universal cover is the
		unit disk. In this paper, we will be concerned with hyperbolic planar
		domains which, by definition, are covered by the unit disk and therefore
		are complete hyperbolic. We further note that a bounded planar domain
		has universal cover the unit disk and to prove this one does not need
		the full force of the Uniformization theorem. See \cite[Chapter~13]{zakeri2021course}.
	\end{remark}

	\subsection{Inner or Length spaces}
  
  As we have mentioned earlier, the Kobayashi distance is more useful than the
  Carathéodory distance. In any Kobayashi hyperbolic manifold, any pair of points can be
  joined by a minimal geodesic. This is not true for the Carathéodory distance.
  In this section, we state the relevant definitions and results. 

	Let $(M,d)$ be a metric space. Given a curve $\gamma:[a,b] \to M$ in
	$M$, the \textit{length} $l_{d}(\gamma)$ of $\gamma$ associated to
	the distance function $d$ is defined as follows:
	\[
		l_{d}(\gamma):=\sup \sum_{i=1}^{n} d(\gamma(t_{i-1}),\gamma(t_{i}))
		.
	\]
	where the supremum is taken over all partitions $a=t_{0}
	<t_{1}<\cdots<t_{n}=b$ of the interval $[a,b]$.

	We say a curve $\gamma$ is \textit{rectifiable} if its length
	$l_{d}(\gamma)$ is finite. A metric space $M$ is said to be \textit{finitely
	arc-wise connected} if every pair of points $x$ and $y$ in $M$ can
	be joined by a rectifiable curve. It is said to be \textit{without
	detour} if, for every $x\in M$ and $\epsilon>0$, there exists a
	$\delta>0$ such that each $y$ with $d(x,y)< \delta$ can be joined
	to $x$ by a curve $\gamma$ of length is less than $\eps$.

	\begin{definition}
		Let $(M,d)$ be a finitely arc-wise connected metric space. For
		every $x, y \in M$, define the \textit{inner distance} $d^{i}$ as
		follows;
		\[
			d^{i}(x,y):=\inf \big\{l_{d}(\gamma): \gamma \ \text{is a
			rectifiable curve joining $x$ and $y$}\big\}
		\]
	\end{definition}

	From the definition of $d^{i}$, it immediately follows that
	\[
		d(x,y)\leq d^{i}(x,y) \quad \forall x,y\in M.
	\]
	It is easy to see that $d^{i}$ is a distance function on $M$.

	\begin{result}
		Let $(M,d)$ be a finitely arc-wise connected metric space. Then
		$d$ and $d^{i}$ define the same topology on $M$ if and only if $M$
		is without detour.
	\end{result}

	\begin{result}
		Let $(M,d)$ be a finitely arc-wise connected metric space. Then
		$l_{d}(\gamma)=l_{d^i}(\gamma)$ for all curve $\gamma$ in $M$.
	\end{result}

	A metric space $(M,d)$ is said to be \textit{inner or length space} if
	$d=d^{i}$. We call such a $d$ as \textit{inner distance}. If $(M,d)$
	is a finitely arc-wise connected metric space, then $(d^{i})^{i}=d^{i}$
	and therefore $(M,d^{i})$ is an inner space. We say a metric space
	$M$ is said to be \textit{finitely compact} if every bounded infinite
	set has at least one accumulation point. A curve $\gamma$ from $x$
	and $y$ is a called \textit{minimizing geodesic} from $x$ to $y$ if $l
	_{d}(\gamma)=d(x,y)$.

	The following result shows the existence of minimizing geodesics. For a
	proof, see \cite{burago2001metric}.

	\begin{result}\label{result:min} In a finitely compact inner space, any two
	  points can be connected by a minimizing geodesic.
	\end{result}

  We note that in \cite{kobayashi1973remarks} Kobayashi himself has proved that the Kobayashi	distance on a hyperbolic complex space is an inner distance.

	\begin{result}
		\label{Kobayshi inner} Let $M$ be a hyperbolic complex manifold.
		Then the Kobayashi distance is an inner distance.
	\end{result}

	In particular, as every hyperbolic Riemann surface is an inner space and
	finitely compact, any two points in the hyperbolic
	Riemann surface can be joined by a minimizing geodesic.

	\section{Finitely-connected domains}\label{S:canonical}

	In this section, we summarize the results we need about finitely-connected
	domains. The book by Conway \cite{conway1978functions} gives a detailed
	treatment of canonical forms of such domains including a proof of Koebe's
	theorem (Result~\ref{R:koebe}). 

	\begin{definition}
		We say a domain  $D$ in $\C$ is \textit{$n$-connected}
		if $\hat{\mathbb{C}}\setminus D$ has $n+1$ components. A domain $D$
		is said to be \textit{finitely-connected} if it is $n$-connected for some
		non-negative integer $n$. 
	\end{definition}

	 At first glance, it might seem obvious that a domain whose fundamental group
	 is finitely generated is also finitely-connected. The case $n = 0$ says that
	 the complement in the extended complex plane of a simply-connected domain is connected. This is proved in a first
	 course in complex analysis using what is sometimes called the Separation
	 theorem. To deal with the case $n > 0$, we need the following more general version of
	 the Separation theorem. For a proof, see the classic book by Saks and Zygmund
	 \cite{saks1937analytic}.

	\begin{result}
		[Separation Theorem]\label{seperation theorem} Let $D$ be a
		bounded planar domain, and let $K_{1}$ and $K_{2}$ be two non-empty,
		bounded, disjoint connected components of $\hat{\C}\setminus D$. We can find
		a simple closed polygonal path $\gamma$ in $D$ such that for each $a
		\in K_{1}, b\in K_{2}$, we have
		\[
			\textsf{wind}(\gamma,a) = 1, \quad \textsf{wind}(\gamma,b) = 0.
		\]
	\end{result}

	A straightforward argument using the above result gives the following

	\begin{corollary}\label{cor:fin}
		Let $D$ be a bounded planar domain. If $\pi_{1}(D)$ is
		finitely-generated, then $D$ is finitely-connected.
	\end{corollary}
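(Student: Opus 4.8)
The plan is to prove the contrapositive: assuming that $\hat{\C}\setminus D$ has infinitely many components, I will show that $\pi_1(D)$ is not finitely generated. Since $H_1(D;\Z)$ is the abelianization of $\pi_1(D)$, it is a quotient of $\pi_1(D)$, so it is enough to show that the abelian group $H_1(D;\Z)$ is not finitely generated; for this it suffices to produce, for every $N$, a subgroup of $H_1(D;\Z)$ that is free of rank $N+1$.

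The detecting homomorphisms will be winding numbers. Because $D$ is bounded, exactly one complementary component is unbounded and there are infinitely many bounded ones; fix $N+1$ distinct bounded components $K_0,\dots,K_N$. For a loop $\gamma$ in $D$ and a complementary component $K$, the whole of $K$ lies in a single connected component of $\hat{\C}\setminus\gamma$ (as $\gamma\subset D$), so $\textsf{wind}(\gamma,\cdot)$ is constant on $K$; call this common value $w_K(\gamma)$. Since winding numbers are homotopy invariant and additive under concatenation, each $w_{K_q}$ descends to a homomorphism $w_{K_q}\colon H_1(D;\Z)\to\Z$, and together they give a homomorphism $c\mapsto\bigl(w_{K_0}(c),\dots,w_{K_N}(c)\bigr)$ from $H_1(D;\Z)$ to $\Z^{N+1}$.

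The heart of the argument is to build, for each $p\in\{0,\dots,N\}$, an \emph{isolating cycle}: a $1$-cycle $\tau_p$ in $D$ with $w_{K_q}(\tau_p)=\delta_{pq}$. Granting these, the classes $[\tau_0],\dots,[\tau_N]$ map to the standard basis of $\Z^{N+1}$, hence are $\Z$-linearly independent in $H_1(D;\Z)$, giving the required rank-$(N+1)$ free subgroup; letting $N\to\infty$ then finishes the proof. To construct $\tau_p$ I would apply the Separation Theorem to each pair $(K_p,K_q)$ with $q\neq p$, obtaining a simple closed polygonal path $\sigma_q\subset D$ whose bounded inside $I_q$ contains $K_p$ (winding $1$) and whose unbounded outside contains $K_q$ (winding $0$). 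Put $W:=\bigcap_{q\neq p}I_q$, an open neighbourhood of the connected set $K_p$ that is disjoint from every other $K_q$. Since $K_q$ lies in the unbounded component of $\C\setminus\sigma_q$, a point of $K_q$ can be joined to $\infty$ by an arc avoiding $\overline{I_q}\supseteq W$; hence every $1$-cycle supported in $W$ has winding number $0$ about each $K_q$ with $q\neq p$. Thus it remains only to produce a $1$-cycle $\tau_p$ supported in $D\cap W$ with winding number $1$ about $K_p$.

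I expect this last point to be the main obstacle. The difficulty is twofold: I must upgrade the \emph{pairwise} separation furnished by the Separation Theorem to a cycle that isolates $K_p$ from the finitely many other chosen components at once, and I must ensure that this cycle avoids \emph{all} complementary components of $D$, not merely $K_0,\dots,K_N$, so that it genuinely lies in $D$. Both are handled by exploiting that a complementary component of a planar domain does not separate the sphere, i.e.\ has connected complement in $\hat{\C}$: concretely, one overlays a sufficiently fine square grid and takes $\tau_p$ to be the boundary cycle of the union of the closed grid squares that meet $K_p$. For a fine enough grid this cycle lies in $D\cap W$ and has winding number $1$ about $K_p$, which is exactly what is needed. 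Verifying these topological facts carefully—rather than the homological bookkeeping, which is routine—is where the real work of the proof lies.
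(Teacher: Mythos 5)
Your high-level plan is sound and is, as far as one can tell, exactly the fleshing-out the paper has in mind when it calls the corollary ``a straightforward argument'' from the Separation Theorem (the paper records no proof): pass to the contrapositive, use winding numbers about complementary components as homomorphisms $H_1(D;\Z)\to\Z$, and reduce everything to producing isolating cycles $\tau_p$ with $w_{K_q}(\tau_p)=\delta_{pq}$. The homological bookkeeping, the constancy of the winding number on each complementary component, and the use of the pairwise separation loops to force $w_{K_q}(\tau_p)=0$ for cycles supported in $W$ are all correct. The gap is precisely at the step you yourself flag as the crux, and your proposed resolution of it fails: it is \emph{not} true that, for a fine enough grid, the boundary cycle of the union of closed grid squares meeting $K_p$ lies in $D$. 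The construction implicitly needs $\mathrm{dist}\bigl(K_p,\,E\setminus K_p\bigr)>0$, where $E=\hat{\C}\setminus D$, and this distance can be $0$: other components of $E$ may accumulate on $K_p$. Concretely, let $K_p$ be a closed disk and let $E$ also contain a sequence of nearly-full circular arcs of radii shrinking to that of $K_p$ (each arc slit so that the complement remains a domain). For \emph{every} mesh size, the grid polygon around the squares meeting $K_p$ hugs $\partial K_p$ at the scale of the mesh, while arcs of $E$ are present at that scale and essentially all angles, so the polygon crosses some of them; refining the grid only moves the problem to smaller arcs. The fact you invoke --- that a complementary component has connected complement in $\hat{\C}$ --- is true but gives no clearance between $K_p$ and the rest of $E$, which is what the grid argument actually requires.

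The missing idea is a clopen separation inside the compact set $E$. In a compact Hausdorff space components coincide with quasicomponents (the \v{S}ura-Bura lemma), so $K_p$ is the intersection of its relatively clopen neighbourhoods in $E$; by compactness one can choose a single relatively clopen $F\subset E$ with $K_p\subset F$ and $F$ disjoint from the finitely many other chosen components $K_q$ and from the component of $E$ containing $\infty$. Then $F$ and $E\setminus F$ are disjoint compact sets, so $\mathrm{dist}(F,E\setminus F)>0$, and your grid construction, applied to $F$ rather than to $K_p$, works verbatim: for mesh smaller than a fixed fraction of this distance, the boundary cycle $\tau_p$ of the squares meeting $F$ avoids $F$ (any edge touching $F$ is an interior edge, since both adjacent squares then meet $F$) and avoids $E\setminus F$ (by the distance bound), hence lies in $D$, and has $\textsf{wind}(\tau_p,\cdot)=1$ on $F\supset K_p$ and $=0$ on $E\setminus F\supset K_q$ for $q\neq p$. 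Note that this replacement also makes your scaffolding $W=\bigcap_{q\neq p}I_q$ from the pairwise Separation Theorem unnecessary, since the vanishing $w_{K_q}(\tau_p)=0$ now comes directly from $K_q\subset E\setminus F$ lying outside the union of squares. With this one repair the proof is complete.
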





	\section{The proofs of Theorem~\ref{T: car} and Theorem~\ref{T: kob}}\label{S:proofs}

	We will first prove Theorem~\ref{T: kob}. As we remarked in the introduction,
	the proof is reminiscent of the proof of the Van Kampen theorem.

	\noindent
	\begin{proof}[Proof of Theorem~\ref{T: kob}.] Denote the Kobayashi distance
	on $R$ by $d$ and let $B$ be some open ball under $d$. By
	completeness, $\overline{B}$ is a compact subset of $R$. Let
	$p:\disk \to R$ be the universal covering map. Then there is a disk
	$D \subset \disk$ with the same radius (under the Poincaré distance)
	as $B$ such that $p(\overline{D}) = \overline{B}$. For each
	$x \in \overline{D}$, we can find another disk $D_{x}$ such that $p$
	is an isometry on $D_{x}$. We can cover $\overline{D}$ by such disks.
	By Lebesgue's covering number lemma, there exists $r > 0$ such that
	$p$ is an isometry on every open disk of disk $r$ (under the Poincaré
	distance) centered at some point of $\overline{D}$. By compactness,
	finitely many disks of radius $r/3$ centered at the points of
	$\overline{D}$ cover $\overline{D}$. Enumerate these disks as $D_{1},
	\dots, D_{n}$. The following facts are immediate.

	\begin{itemize}
		\item $D = \bigcup_{i=1}^{n} D_{i} \cap D.$

		\item If $D_{i} \cap D_{j} \cap D \neq \emptyset,$ then $D_{i} \cap
			D_{j} \cap D$ is path-connected. In fact the intersection is convex.

		\item Each $B_{i} := p(D_{i} \cap D)$ is simply-connected.
			Furthermore, $B_{i} \cap B_{j}$ is simply connected as well whenever
			it is not empty.

		\item $B = \bigcup_{i=1}^{n} B_{i}$.
	\end{itemize}

	Now let's take $\mathcal{X}$ to be a finite set containing a point from
	$B_{i}\cap B_{j}$, for each $i, j$ such that
	$B_{i}\cap B_{j}\neq \varnothing$. For each $i$ and $x, x' \in \mathcal{X}$
	such that $x,x' \in B_{i}$, denote $\sigma_{x,x'}^{i}$ to be a path
	in $B_{i}$ from $x$ to $x'$. Let $a \in \mathcal{X}$ be some point.
	Define a \textit{special loop} to be a
	loop at $a$ that is equal to a finite product of paths of the form
	$\sigma_{x,x'}^{i}$. Obviously the set of special loops
	is finite.

    Let $\gamma:[0,1] \to B$ be a loop based at $a$.
	Using compactness and by re-indexing the $B_{i}$'s, if necessary, we
	can find a partition $P=\{0=a_{0}<a_{1}<\cdots<a_{m}=1\}$ of $[0,1]$
	such that for each $k=1,2,\cdots,m$,
	\[
		\gamma([a_{k-1}, a_{k}]) \subset B_{k}, \gamma(a_{k}) \in B_{k}\cap
		B_{k+1}.
	\]
	Let $\gamma_{k}$ be the restriction of $\gamma$ to the interval $[a_{k-1}
	, a_{k}]$ reparametrized so that the domain is $[0,1]$. Let
	$z_{k} \in B_{k}\cap B_{k+1} \cap \mathcal{X}$, and $\sigma_{k}$ be a path in $B_{k}\cap
	B_{k+1}$ from $z_{k}$ to $\gamma(a_{k})$, with the understanding that
	$z_{0}=z_{m}=a$. Note that $\sigma_{0}$ and $\sigma_{m}$ are both equal
	to the constant path based at $a$, and
	\begin{align*}
		\gamma \sim \gamma_{1} * \gamma_{2} *\cdots *\gamma_{m} & \sim \sigma_{0}*\gamma_{1}*\overline{\sigma_1}*\sigma_{1}*\gamma_{2}*\overline{\sigma_2}*\cdots *\sigma_{m-1}*\gamma_{m}*\overline{\sigma_m} \\
		                                                        & \sim \Tilde{\gamma_1}*\Tilde{\gamma_2}*\cdots*\Tilde{\gamma_m}
	\end{align*}
	where, $\Tilde{\gamma_k}=\sigma_{k-1}*\gamma_{k}*\overline{\sigma_k}$.
	For each $k$, $\Tilde{\gamma_k}$ is a path in $B_{k}$ from $z_{k-1}$
	to $z_{k}$ and since $B_{k}$'s are simply connected,
	$\Tilde{\gamma_k}$ is path-homotopic to $\sigma_{z_{k-1},z_k}^{k}$. It
	follows that $\gamma$ is path-homotopic to a special loop. Hence
	$\pi_{1}(B,a)$ is finitely generated. The fact that $B$ is finitely-connected
	now follows from Corollary~\ref{cor:fin}.
\end{proof}

	\begin{remark}
		The above proof will also work in higher dimensions and the setup
		is as follows. Suppose $X$ is a complex manifold whose universal
		covering space is some hyperbolic convex domain $D$ in $\C^{n}$. Then
		the fundamental group of any Kobayashi ball in $X$ must be finitely
		generated. This follows because $X$ must be a complete hyperbolic space and every
		Kobayashi ball in $D$ is convex; see Corollary 4.8.3 in
		\cite{kobayashi1998hyperbolic} for the case that $D$ is bounded and
		Proposition 3.2 in \cite{bracci2009hyperbolicity} for the unbounded case.
	\end{remark}

	Now we come to the balls under the Carathéodory distance. It is possible
	for such a ball to be disconnected (see \cite{ng2021caratheodory}). So we will
	deal with connected components. 
	The key fact we will use is the
	subharmonicity of the Carathéodory distance. The maximum principle immediately
	yields the following

	\begin{lemma}
		\label{L:relcomp} Let $D \subset \C$ be a C-hyperbolic domain.
		Let $B$ be a relatively compact connected component of a Carathéodory ball $B(p,r)$. Then no
		component of $\hat{\C}\setminus B$ is a compact subset of $D$.
	\end{lemma}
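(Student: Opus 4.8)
The plan is to fix the center $p$ of the ball and work with the function $g := c_{D}(p,\cdot)$ on $D$. The only analytic input is the earlier result that $\log c_{D}$ is plurisubharmonic on $D\times D$; restricting to the slice $\{p\}\times D$ shows that $u:=\log g=\log c_{D}(p,\cdot)$ is subharmonic on $D$ (finite except possibly the value $-\infty$ at $z=p$, which does not affect any maximum-principle argument). Everything else is an application of the maximum principle together with a little plane topology.

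First I would pin down the boundary values of $g$ on $\partial B$. Since $B$ is a connected component of the open set $B(p,r)=\{z\in D: g(z)<r\}$ and $B$ is relatively compact in $D$, its topological boundary $\partial B$ lies in $D$; a point $w\in\partial B$ is a limit of points of $B$, so $g(w)\le r$ by continuity, while $w\notin B(p,r)$ because a boundary point of one component of an open set cannot lie in the open set, so $g(w)\ge r$. Hence $g\equiv r$ on $\partial B$.

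Next, suppose toward a contradiction that some component $K$ of $\hat{\C}\setminus B$ is a compact subset of $D$. The key topological step is that $\partial K\subseteq\partial B$: since $K$ is a component of the closed set $\hat{\C}\setminus B$ it is itself closed, so $\partial K\subseteq K\subseteq\hat{\C}\setminus B$; and if some $w\in\partial K$ had a connected neighborhood $U$ missing $B$, then $U\subseteq\hat{\C}\setminus B$ would be a connected set meeting $K$, hence contained in the maximal connected set $K$, forcing $w\in\mathrm{int}(K)$ and contradicting $w\in\partial K$. Thus every $w\in\partial K$ lies in $\overline{B}$ and so in $\partial B$, and consequently $g\equiv r$ on $\partial K$. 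Now $u$ is subharmonic on $D$ and $K$ is compact in $D$, so the maximum principle gives $\max_{K}u=\max_{\partial K}u=\log r$, i.e. $g\le r$ on all of $K$. The contradiction then comes from the strong maximum principle: choose $w\in\partial K$, so that $g(w)=r$; for a suitably small disk $\Delta\subseteq D$ centered at $w$ we will have $g\le r$ throughout $\Delta$ (its points lie in $B$, where $g<r$, or in $K$, where $g\le r$), while $\Delta\cap B\ne\varnothing$ forces $u<\log r$ somewhere in $\Delta$. Thus $u$ attains the interior local maximum value $\log r$ at $w$ without being constant near $w$, which the strong maximum principle forbids. This contradiction shows that no component of $\hat{\C}\setminus B$ can be a compact subset of $D$.

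I expect the main obstacle to be the clean execution of the last step: one must ensure that the chosen boundary point $w$ has a \emph{full} neighborhood $\Delta$ meeting only $B$ and $K$, so that $g\le r$ genuinely holds on all of $\Delta$; this requires knowing that no other complementary component of $B$—in particular the unbounded one—accumulates at $w$. I would handle this by selecting $w$ in $\partial K\setminus\overline{\bigcup_{K'\ne K}K'}$, a point of the inner boundary not approached by the remaining complementary components, and note that ruling out local constancy of $u$ is automatic, since $g<r$ strictly throughout the neighboring region $B$.
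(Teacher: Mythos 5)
Your argument is sound up to and including the conclusion that $g := c_D(p,\cdot)$ satisfies $g \le r$ on $K$: the boundary identity $g \equiv r$ on $\partial B$, the inclusion $\partial K \subseteq \partial B$, and the maximum-principle estimate $\max_K \log g = \max_{\partial K}\log g$ are all correct. The genuine gap is exactly the step you flag at the end: you need a point $w \in \partial K$ admitting a full disk $\Delta$ with $\Delta \subseteq B \cup K$, and you propose to take $w \in \partial K \setminus \overline{\bigcup_{K' \neq K} K'}$ without showing such a point exists. For a general domain $B$ it need not: take $K = [0,1]\times\{0\}$, $L_n = [0,1]\times\{1/n\}$, and $B = \C \setminus \bigl(K \cup \bigcup_n L_n\bigr)$. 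Then $B$ is a domain, the components of $\hat{\C}\setminus B$ are $K$, the segments $L_n$, and $\{\infty\}$, and \emph{every} point of $\partial K = K$ lies in the closure of $\bigcup_n L_n$. So the existence of your $w$ is a nontrivial assertion that would have to exploit some special analytic structure of components of Carathéodory balls, and your proof supplies none. Without it, the local-maximum claim at $w$ collapses: arbitrarily close to $w$ there may be points of other complementary components on which you have no upper bound for $g$ (such components can carry values $g \ge r$, or even points where $g$ is uncontrolled), so the strong maximum principle cannot be invoked.

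The paper sidesteps precisely this accumulation problem using the Separation Theorem (Result~\ref{seperation theorem}): it produces a simple closed polygonal curve $\gamma \subset B$ enclosing $K$, and then replaces $K$ by $K'$, the union of \emph{all} components of $\hat{\C}\setminus B$ enclosed by $\gamma$. Because every nearby complementary component inside $\gamma$ has been absorbed into $K'$, the relevant comparison set is the curve $\gamma$ itself, which lies in $B$ where $g < r$; since $\max_{K'} g \ge r$ (witnessed, as in your argument, by $\partial K \subseteq \partial B$), the maximum principle applied inside $\gamma$ yields a point enclosed by $\gamma$, outside $K'$, with $g > r$ --- impossible, since any such point would have to lie in $B$. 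If you import this single move --- enclose $K$ polygonally and absorb every enclosed complementary component before applying the maximum principle --- the rest of your proof can stand essentially unchanged.
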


	\begin{proof}
		Suppose $K \subset D$ is a compact component of $\hat{\C}\setminus
		B$. The separation theorem shows the existence of a simple polygonal
		loop in $B$ that encloses $K$. Let $K'$ be the union of those components
		of $\hat{\C}\setminus B$ that are enclosed by $\gamma$. Then $K'$
		is a compact set and therefore there exists a point $z \in K'$
		that maximizes $c_{D}(p,\bcdot)$ on $K'$ and this maximum is at least $r$.  
    By the maximum principle, we can find a
		point $z_{0} \in D\setminus K'$ that is enclosed by $\gamma$ with
		$c_{D}(p,z_{0}) > r$. This contradicts the definition of $K'$.
	\end{proof}

  \begin{remark}
    We have tacitly used the Jordan curve theorem for polygonal curves in the
    proof of the above lemma.
  \end{remark}

	\noindent
	\begin{proof}[Proof of Theorem~\ref{T: car}.] Let $B$ be a relatively compact
	component of the Carathéodory ball $B(p,r)$. We can find a simple
	closed polygonal path $\gamma$ in $D$ that encloses $\overline{B}$. Any
	bounded component of $\hat{\C}\setminus B$ is inside $\gamma$.
	Furthermore, no such component is a compact subset of $D$. Let $K$
	be the union of those components of $\hat{\C}\setminus D$ contained
	inside $\gamma$. Then $K$ is a compact subset of $\hat{\C}\setminus D$.
  Since $\overline{B}$ is compact, we have $d_{Euc}(\overline{B}, K) =s > 0$.
	Note that finitely many balls of radius $s/2$ cover $K$. Each bounded
	component of $\hat{\C}\setminus B$ must intersect at least one of
	these balls. Thus, there are only finitely many components of $\hat{\C}
	\setminus B$. 
  \end{proof}

	\section{Applications to Conformal Mappings}\label{S:applications}

  We now give some applications of our central results. We  confine
  ourselves to conformal mappings of planar domains. 

	Our first application is rather trivial but nevertheless
	illuminating. The result is a version of the Schwarz lemma for arbitrary
	bounded domains and is usually called the Cartan--Carathéodory
	theorem though it seems to have been first proved by Rado.

	\begin{theorem}
		Let $D \subset \C$ be a bounded domain. Let $f:D \to D$ be
		holomorphic and $f(a) = a$, for some $a \in D$. Then
		\begin{enumerate}
			\item $|f'(a)| \leq 1$.

			\item The map $f$ is an automorphism iff $|f'(a)| = 1$.

			\item If $f'(a) = 1$ then $f$ is the identity map.
		\end{enumerate}
	\end{theorem}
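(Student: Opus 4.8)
The plan is to reduce everything to the classical Schwarz lemma on the disk by passing to the universal cover. Since $D$ is bounded, its universal cover is the unit disk, as recalled in the remark on universal covers in Section~\ref{S: hyperbolic}; fix a covering map $p\colon\disk\to D$ with $p(0)=a$. As $\disk$ is simply connected, the holomorphic map $f\circ p\colon\disk\to D$ lifts through $p$ to a holomorphic $\tilde f\colon\disk\to\disk$ with $p\circ\tilde f=f\circ p$, and we may normalize the lift so that $\tilde f(0)=0$. Differentiating $p\circ\tilde f=f\circ p$ at $0$ and using that $p$ is a local biholomorphism gives $\tilde f'(0)=f'(a)$.

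With this dictionary in place, parts (1) and (3) are immediate. The ordinary Schwarz lemma applied to $\tilde f$ gives $|\tilde f'(0)|\le 1$, which is (1). If $f'(a)=1$ then $\tilde f'(0)=1$, so the equality case of the Schwarz lemma forces $\tilde f=\mathrm{id}_{\disk}$; then $p=p\circ\tilde f=f\circ p$, and surjectivity of $p$ yields $f=\mathrm{id}_D$, which is (3).

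For (2), the ``only if'' direction is formal: if $f$ is an automorphism then $f^{-1}$ is a holomorphic self-map fixing $a$, so $|f'(a)|\le1$ and $|(f^{-1})'(a)|\le1$, while $f'(a)\,(f^{-1})'(a)=1$ forces $|f'(a)|=1$. For the ``if'' direction, suppose $|f'(a)|=1$. The equality case of Schwarz now tells us that $\tilde f$ is a rotation $z\mapsto e^{i\theta}z$, hence an automorphism of $\disk$. The issue is that this does not by itself say that $f$ descends to an automorphism of $D$, since $f$ is only the quotient of $\tilde f$ by the deck group.

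The main obstacle is exactly this descent, and I would handle it by iteration rather than by chasing the deck group. Write $\lambda=f'(a)$, so $|\lambda|=1$, and choose $n_j\to\infty$ with $\lambda^{n_j}\to1$. Since $D$ is bounded, Montel's theorem lets us pass to a subsequence along which the iterates $f^{\circ n_j}$ converge locally uniformly to a holomorphic $g\colon D\to\overline D$ with $g(a)=a$ and $g'(a)=\lim\lambda^{n_j}=1$. The delicate point is that $g'(a)=1\neq 0$ makes $g$ nonconstant, and a standard argument (the open mapping theorem together with boundedness) shows that $g$ in fact maps $D$ into $D$; part (3) then applies to $g$ and gives $g=\mathrm{id}_D$, i.e. $f^{\circ n_j}\to\mathrm{id}_D$. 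Passing to a further subsequence with $f^{\circ(n_j-1)}\to h$ (Montel once more) and taking limits in $f\circ f^{\circ(n_j-1)}=f^{\circ n_j}=f^{\circ(n_j-1)}\circ f$ yields $f\circ h=h\circ f=\mathrm{id}_D$, so $f$ is an automorphism. I expect the verification that the normal-family limit $g$ lands inside $D$, and not merely in $\overline D$, to be the only genuinely non-routine step.
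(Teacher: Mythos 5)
Your argument is correct in outline, but it is a genuinely different route from the paper's --- in fact it is essentially the classical covering-space-plus-iteration proof that the paper deliberately sidesteps. The paper restricts $f$ to the connected component $B$ of a small Carath\'eodory ball containing $a$ (invariant since $f$ is distance decreasing), shows via Lemma~\ref{L:relcomp} that $B$ is simply connected, and then applies the Schwarz lemma there; Remark~\ref{rmk:cartan} states explicitly that the point of this proof is to \emph{avoid} the non-trivial fact that a bounded planar domain is covered by $\disk$, which is precisely the fact your proof leans on from the first line. Your proof also needs Montel's theorem and Cartan's iteration trick for part (2), whereas the paper's restriction argument keeps everything at the level of the Schwarz lemma on a simply connected invariant set. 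What your route buys is independence from the paper's machinery (Carath\'eodory balls, subharmonicity, the Separation theorem); what the paper's route buys is elementarity in the opposite direction, avoiding uniformization-type input.

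There is one step in your write-up that, as justified, would fail. You claim that ``the open mapping theorem together with boundedness'' shows the normal limit $g = \lim f^{\circ n_j}$ maps $D$ into $D$ rather than into $\overline{D}$. This is not enough: for a domain such as $\disk\setminus[0,1)$, the interior of $\overline{D}$ strictly contains $D$, so knowing that $g(D)$ is open and contained in $\overline{D}$ does not prevent $g$ from hitting boundary points on the slit. The correct standard tool is Hurwitz's theorem on omitted values: for any $w_0 \in \partial D$, each $f^{\circ n_j}$ omits the value $w_0$, so the nonconstant limit $g$ (nonconstant because $g'(a)=1$) also omits $w_0$; hence $g(D) \cap \partial D = \emptyset$, and since $g(D)\subset\overline{D}$ and $D$ is open, $g(D)\subset\overline{D}\setminus\partial D = D$. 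Note that the same issue recurs, silently, at your final step: to form the compositions $f\circ h$ and $h\circ f$ you need $h(D)\subset D$, which again follows from Hurwitz once you observe $h'(a)=\lim \lambda^{n_j-1}=\lambda^{-1}\neq 0$, so $h$ is nonconstant. (Alternatively, you can avoid $h$ entirely: from $f^{\circ n_j}\to \mathrm{id}_D$, injectivity of $f$ is immediate, and surjectivity follows since for $w\in D$ the argument principle forces $f^{\circ n_j}$ to attain $w$ for large $j$, while $f^{\circ n_j}(D)\subset f(D)$.) With these repairs your proof is complete.
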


	\begin{proof}
		As the topology generated by the Carathéodory distance is the same
		as the Euclidean topology, we can find a Carathéodory ball
		$B(a,r)$ that is contained in an Euclidean disk whose closure is
		contained in $D$. Let $B$ be the connected component of $B(a,r)$ that
		contains $a$. Then $f(B) \subset B$. Furthermore, by 
		Lemma~\ref{L:relcomp}, $\hat{\C}\setminus B$ consists of only the single
		unbounded component as any bounded component would necessarily be a
		subset of $D$. This means $B$ is simply connected and the Schwarz
		lemma delivers the theorem.
	\end{proof}

	\begin{remark}\label{rmk:cartan}
		The idea of applying the Schwarz lemma on a simply connected open set
		that contains a fixed point and is invariant under $f$ is obviously
		not new. Indeed, the same argument works for all hyperbolic Riemann
		surfaces as the existence of a simply connected invariant open set
		is immediate from the fact the unit disk is the universal covering. The
		novelty in the above proof is in the avoidance of using the non-trivial fact
		that the universal covering of a bounded domain is the unit disk. 
	\end{remark}

  We now move on to the case when there are two fixed points. It is well-known
  that in this case the map must be an automorphism. The standard proof is using
  iteration as can be seen in \cite[Corollary~3.1.16]{abate2023holomorphic}. We
  will prove this result for bounded domains using the same idea of restriction.
  We first prove the following more general theorem that does \textit{not} seem
  amenable to a proof using iteration.

	\begin{theorem}[Watt \cite{watt2001cartan}]\label{thm:watt}
		\label{two fixed points} Let $D \subset \C$ be a bounded domain,
		$a\in D$ and $f\in \textsf{Hol}(D, D)$ with $f(a)=a$. Suppose there
		is an element $b \in D, b \neq a$ such that $d_{D}(a,b) = d_{D}(f(b),a)$,
		then $f$ must be an automorphism where $d_{D}$ is the Kobayashi distance.
	\end{theorem}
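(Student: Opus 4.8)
The plan is to pass to the universal covering $p\colon\disk\to D$ --- legitimate because a bounded $D$ has the unit disk as universal cover --- lift $f$ so that the lift fixes the point over $a$, and force this lift to be a rotation via the rigidity case of the Schwarz--Pick lemma. Concretely, normalize $p(0)=a$ and let $\tilde f\colon\disk\to\disk$ be the lift of $f$ with $\tilde f(0)=0$. Using Result~\ref{P: cover} I would pick a minimizing lift $\tilde b\in p^{-1}(b)$ with $\rho(0,\tilde b)=d_D(a,b)=:R$; the minimum is attained since the fibre is discrete and closed while $\rho(0,\cdot)$ is proper on $\disk$. As $p\circ\tilde f=f\circ p$, the point $\tilde f(\tilde b)$ lies over $f(b)$, so Result~\ref{P: cover} together with the distance-decreasing property of $\tilde f$ gives
\[
  d_D(a,f(b))\le\rho(0,\tilde f(\tilde b))=\rho(\tilde f(0),\tilde f(\tilde b))\le\rho(0,\tilde b)=R .
\]
The hypothesis $d_D(f(b),a)=R$ collapses every inequality to an equality; in particular $\rho(\tilde f(0),\tilde f(\tilde b))=\rho(0,\tilde b)$ with $\tilde b\ne 0$, whence the equality case of the Schwarz--Pick lemma forces $\tilde f\in\mathrm{Aut}(\disk)$, and since it fixes $0$ it is a rotation.

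I would then push this information downstairs. Because $\tilde f$ is a biholomorphic isometry of $(\disk,\rho)$ and $p$ is a holomorphic covering, hence a local isometry for the Kobayashi metric, the relation $f\circ p=p\circ\tilde f$ exhibits $f$ as a local biholomorphism of $D$ whose Jacobian with respect to the hyperbolic area form is identically $1$. Surjectivity of $f$ onto $D$, and more usefully onto each Kobayashi ball centred at $a$, is immediate: given $z$, choose a minimizing lift $\tilde z$ and set $w:=p(\tilde f^{-1}(\tilde z))$; then $f(w)=z$ and $d_D(a,w)\le\rho(0,\tilde f^{-1}(\tilde z))=\rho(0,\tilde z)=d_D(a,z)$, so $w$ lies in the same ball as $z$. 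Thus $f$ carries each ball $B:=B(a,R')$ onto itself, exactly the restriction-to-balls philosophy of the paper.

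The main obstacle is upgrading \emph{``the lift is a rotation''} to \emph{``$f$ is globally injective''}, i.e.\ ruling out that $f$ is a proper self-covering rather than an automorphism. I would settle this with an area argument on the balls, exploiting that they are relatively compact --- by completeness and finite compactness of the bounded domain $D$ --- and hence of finite hyperbolic area. Fixing $R'>R$ and writing $B=B(a,R')$, the area formula for the local isometry $f|_B\colon B\to B$ reads $\mathrm{area}(B)=\int_B \#\{w\in B: f(w)=z\}\,d\omega(z)$, and since $f|_B$ is onto, the integrand is $\ge 1$ and must therefore equal $1$ almost everywhere. An open-mapping argument (the set where a fibre has at least two points is open, so of positive measure if nonempty) then makes $f|_B$ injective. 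Finally, every point of $D$ lies at finite Kobayashi distance from $a$, so the balls $B(a,R')$ exhaust $D$ as $R'\to\infty$, and injectivity on each ball yields injectivity on all of $D$. Combined with surjectivity, $f$ is a bijective holomorphic self-map, hence a biholomorphism, as claimed.
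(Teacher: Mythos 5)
Your proof is correct, and it takes a genuinely different route from the paper's. The paper never leaves $D$: by the Cartan--Carath\'eodory theorem it suffices to show $|f'(a)|=1$, and assuming $|f'(a)|<1$ it transports a minimizing Kobayashi geodesic from $a$ to $b$ by $f$ (the image is again minimizing, by equality in the distance-decreasing property), then uses a local contraction estimate near $a$ --- implemented on a small simply-connected Kobayashi ball --- to produce a path from $a$ to $f(b)$ of length strictly less than $d_D(a,b)$, a contradiction. You instead go upstairs: choosing a distance-minimizing lift $\tilde b$ (legitimate, as the fibre is closed and discrete and $\rho(0,\bcdot)$ is proper) and using Result~\ref{P: cover}, the hypothesis collapses the chain $d_D(a,f(b))\le\rho(0,\tilde f(\tilde b))\le\rho(0,\tilde b)$ to equalities, and Schwarz--Pick rigidity forces the normalized lift $\tilde f$ to be a rotation. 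You are also right to flag that a rotation lift alone does not finish the argument --- a proper self-covering of $D$ likewise has an automorphic lift --- and your area/multiplicity argument on relatively compact Kobayashi balls (finite hyperbolic area, local isometry, counting formula, openness of the multiple-fibre set, exhaustion of $D$ by balls) correctly closes that gap. That said, the last stage can be shortened considerably: from $f\circ p=p\circ\tilde f$ one reads off $f'(a)=\tilde f'(0)$, hence $|f'(a)|=1$, and the Cartan--Carath\'eodory theorem proved just before in the paper (and invoked by the paper's own proof) immediately gives that $f$ is an automorphism, rendering your surjectivity and area steps unnecessary. The trade-off between the two approaches: yours is a clean application of covering-space rigidity but is wedded to the disk cover, hence to Riemann surfaces, whereas the paper's length-shortening argument, as its remark notes, relies essentially only on the Kobayashi distance being an inner distance and so adapts more readily beyond this setting.
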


	\begin{proof}
		By Cartan's theorem, it suffices to show that $|f'(a)|=1$. By
		Result~\ref{result:min} 
		there is a minimizing geodesic in $D$
		that connects $a$ to $b$, say $\gamma:[0,1] \to D$. Then $\Tilde{\gamma}
		:=f\circ \gamma$ is a curve from $a$ to $f(b)$. We have

		\[
			l_{d_D}(\Tilde{\gamma})\geq d_{D}(f(b),a)=d_{D}(b,a)=l_{d_D}(\gamma
			)
		\]
		and the distance decreasing property gives $l_{d_D}(\Tilde{\gamma})\leq l_{d_D}(\gamma)$. It follows that $\Tilde{\gamma}$ is a
		minimal geodesic that connects $a$ and $f(b)$. If $|f'(a)| <1 $ then there exists a positive real number $R$
		such that $|f'(a)|< R < 1$. Let $D(a,r_{1})$ be a small disk around $a$
		such that $|f'(z)| < R$ for $z\in D(a,r_{1})$. We have
		\begin{equation}\label{eq:contraction}
			|f(z)-a|\leq R |z-a|, \quad \forall z\in D(a,r_{1}).
		\end{equation}

		We can find a simply-connected Kobayashi ball
		$B(a,r)$ such that
		\begin{enumerate}
			\item $B(a,r) \subset D(a,r_{1})$,
			\item $f(B(a,r)) \subset B(a,r')$ where $r' < r$.
		\end{enumerate}
		This is immediate from
		\eqref{eq:contraction} and the fact that the universal covering map of $D$ is a
		local isometry of the Poincaré distance on $\D$ and the Kobayashi
		distance on $D$.

		Now by the continuity of $\gamma$ and $d_{D}(a, \Tilde{\gamma}(t))$, there
		is a $t_{0} \in [0,1]$ such that
		\[
			d_{D}(a,\gamma(t_{0}))=r \quad \text{and}\quad d_D(a, f(\gamma(t_{0}
			))=d_{D}(a, \Tilde{\gamma}(t_{0}))\leq r'. 
		\]
		We have the following chain of inequalities:
		\begin{align*}
			d_{D}(a, f(b))&\leq d_D(a, f(\gamma(t_{0})))+d_D(f(\gamma(t_{0})), f(b
			))\\
			&\leq r' + d_D(\gamma(t_{0}),b) \leq r' + d_D(a,
			b)-r <d_D(a,b).
		\end{align*}
		This is a contradiction. Thus $|f'(a)|=1$ and $f$ must an automorphism.
	\end{proof}

	\begin{remark}
	  The existence of a minimizing geodesic is \textbf{not} essential for
	  the proof though it
	  simplifies the argument. What is really used is the fact that the Kobayashi
	  distance is an inner distance. It is easy to modify the proof to rely
	  only on this latter fact. 
	\end{remark}

	\begin{corollary}
		Let $f$ be a holomorphic self-map of $D$ with two fixed points.
		Then $f$ must be an automorphism. 
	\end{corollary}

 We now come to the proof of Maskit's theorem.

 \begin{proof}[Proof of Result~\ref{R:maskit}] Let $B \subset D$ be a
 Kobayashi ball such that three fixed points of $f$ are in $B$. By completeness,
 $B$ is bounded and by the distance decreasing property of $f$, $f(B) \subset
 B$. The previous corollary shows that $f$ is an automorphism. As $B$ is
 finitely-connected, by Koebe's theorem $B$ is biholomorphic to a domain whose
 boundary components are points or circles and $f$ induces an automorphism of
 this domain whence it  must be a Möbius transformation. As any Möbius
 transformation with three fixed points is the identity, $f$ must be the
 identity as well.
 \end{proof}

	The next result is about the isotropy group of a hyperbolic planar
	domain. The setup is as follows: for fixed $p\in D$, denote $\textsf{
	Iso}_{p}(D)$ to be the subgroup consisting of those automorphisms of
	$D$ that fixes $p$. 
	\begin{theorem}\label{thm:isotropy}
		Let $D$ be a hyperbolic planar domain that is \textit{not} simply connected
		and let $p\in D$. Then the isotropy group $\textsf{Iso}_{p}(D)$ is
		finite cyclic. 
	\end{theorem}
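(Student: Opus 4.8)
The plan is to realise $\textsf{Iso}_{p}(D)$ as a \emph{closed} subgroup of the unit circle via the derivative at $p$, and then to use Theorem~\ref{T: kob} together with Koebe's theorem to show this subgroup cannot be the whole circle. Since a closed proper subgroup of the circle is finite cyclic, this gives exactly the assertion.

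First I would set up the derivative homomorphism $\Phi\colon g \mapsto g'(p)$. As $D$ is a hyperbolic planar domain, its universal cover is $\D$; lifting $g\in\textsf{Iso}_{p}(D)$ to an automorphism $\tilde g$ of $\D$ fixing a chosen preimage $\tilde p$ of $p$, the local-biholomorphism property of the covering gives $g'(p)=\tilde g'(\tilde p)$. Automorphisms of $\D$ fixing $\tilde p$ are rotations in the appropriate coordinate, so $|g'(p)|=1$, and the chain rule (all maps fix $p$) shows $\Phi$ is a homomorphism into the unit circle. Injectivity is the Schwarz-type rigidity underlying the Cartan--Carath\'eodory theorem: if $g'(p)=1$ then $\tilde g'(\tilde p)=1$, forcing $\tilde g=\mathrm{id}$ and hence $g=\mathrm{id}$. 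Thus $\textsf{Iso}_{p}(D)$ is abelian and embeds in the circle.

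Next I would show the image is closed. Since $D$ is complete hyperbolic, closed Kobayashi balls are compact, so for any compact $K\subset D$ and any $g\in\textsf{Iso}_{p}(D)$ the isometry and fixed-point properties give $g(K)\subset \overline{B}(p,R)$ with $R=\max_{z\in K}d_{D}(p,z)$; hence $\textsf{Iso}_{p}(D)$ is a locally bounded, thus normal, family on $D$. Taking local-uniform limits of a sequence and of its inverses, the limit is a holomorphic self-map fixing $p$ that is invertible, i.e.\ again an element of $\textsf{Iso}_{p}(D)$; so $\textsf{Iso}_{p}(D)$ is compact and its continuous image $\Phi(\textsf{Iso}_{p}(D))$ is a closed subgroup of the circle. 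Therefore this image is either finite cyclic or all of the circle.

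The main obstacle is ruling out the second alternative, and this is where Theorem~\ref{T: kob} and Result~\ref{R:koebe} do the real work. Suppose $\Phi$ were surjective, so that $\textsf{Iso}_{p}(D)$ contains a circle's worth of automorphisms fixing $p$. Choose $r$ so large that the Kobayashi ball $B=B(p,r)$ is \emph{not} simply connected (possible since the balls exhaust $D$ and $D$ is not simply connected); every element of $\textsf{Iso}_{p}(D)$ maps $B$ onto itself, and by Theorem~\ref{T: kob}, $B$ is finitely-connected. By Result~\ref{R:koebe} there is a biholomorphism onto a circle domain $B'$ carrying the circle action to a one-parameter group of \emph{M\"obius} automorphisms of $B'$ fixing the image point $p'$. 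A nontrivial compact one-parameter group of M\"obius maps is a group of elliptic rotations with exactly two fixed points; as $p'$ is fixed by the whole group, $p'$ is one of them, and in suitable coordinates the group is $z\mapsto e^{i\theta}z$ with $p'=0$. Being connected, it preserves each of the finitely many boundary components of $B'$ setwise; an invariant boundary circle is then an orbit, hence a circle centred at $p'$. Thus the connected domain $B'$ is bounded by concentric circles about $p'\in B'$, which forces $B'$ to be a disk and hence simply connected, contradicting that $B'$ is biholomorphic to the non-simply-connected $B$. Therefore $\Phi(\textsf{Iso}_{p}(D))$ is a finite subgroup of the circle, so it is finite cyclic, and hence so is $\textsf{Iso}_{p}(D)$.
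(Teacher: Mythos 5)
Your proof is correct, but it takes a genuinely different route from the paper's. The paper argues finiteness directly: it restricts $\textsf{Iso}_{p}(D)$ to a non-simply-connected Kobayashi ball, invokes Theorem~\ref{T: kob} and Koebe's theorem (Result~\ref{R:koebe}) to pass to a circle domain with outer boundary the unit circle, and observes that each induced automorphism is a M\"obius rotation about the image of $p$ permuting the remaining boundary components; since no such component can be a rotation-invariant (concentric) circle, only finitely many rotations qualify, and cyclicity then follows from the injective derivative homomorphism exactly as in your first paragraph. You instead first prove compactness of $\textsf{Iso}_{p}(D)$ by a normal-families argument (entirely absent from the paper, and valid: closed Kobayashi balls are compact by completeness, so the family is locally bounded, and local-uniform limits of sequences together with their inverses stay in the group), deduce that the image of the derivative homomorphism is a closed subgroup of $S^{1}$, hence finite cyclic or all of $S^{1}$, and then kill the circle case with the concentric-boundary argument. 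Both proofs ultimately rest on the same geometric fact --- full rotational symmetry of a circle domain about an interior point forces it to be a disk --- but your dichotomy makes explicit the mechanism that the paper compresses into ``It follows that there are only finitely many elements'' (an infinite rotation group would be dense, and a near-identity nontrivial rotation preserving each boundary circle forces concentricity), at the cost of extra standard machinery. Two steps are worth tightening: the claim ``being connected, it preserves each boundary component setwise'' needs either the remark that the induced permutation of the finitely many, pairwise positively separated components depends continuously (hence locally constantly) on the group element, or the purely algebraic observation that $S^{1}$ is divisible while nontrivial finite groups are not; and the assertion that a nontrivial compact one-parameter group of M\"obius maps is a group of elliptic rotations with a common fixed-point pair deserves a line of justification (parabolic or loxodromic elements generate non-relatively-compact cyclic subgroups, and a compact subgroup of the M\"obius group is conjugate into $\mathrm{PSU}(2)$, where circle subgroups are rotation groups).
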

	\begin{proof}
		Let $B$ be a Kobayashi ball around $p$ with radius $r$ chosen so large that
		$B$ is an $n$-connected domain with $n \geq 1$. By Koebe's theorem, it
		follows that $B$ is biholomorphic to a domain $G$ whose boundary components
		are points or circles. Any element of $\textsf{Iso}_{p}(D)$ induces
		an automorphism of $G$. The induced automorphism is a M{\"o}bius
		transformation that fixes a point and permutes the boundary components of
		$G$ of which there are at least $2$. In fact we may assume that the
		`outer' boundary of $G$ is the unit circle. It follows that there are only
		finitely many elements in $\textsf{Iso}_{p}(D)$. 
    
    According to
		Cartan's theorem (see Remark~\ref{rmk:cartan}), the mapping $f \mapsto |f'(p)|$ from
		$\textsf{Iso}_{p}(D)$ to the unit circle is an injective group
		homomorphism. Hence $\textsf{Iso}_{p}(D)$ is cyclic since it is finite.
	\end{proof}

	\begin{remark}
		Any finitely-connected domain (without punctures) is biholomorphic
		to a circular slit domain; see \cite[Chapter~6.5]{ahlfors1979complex}. This
		fact is simpler to prove than Koebe's theorem. It is
		straightforward to modify the above proof as well as the proof of
		Result~\ref{R:maskit} to rely on this fact instead of Koebe's theorem.
	\end{remark}

	\bibliographystyle{amsalpha}
	\bibliography{conformal}
\end{document}